\documentclass[12pt]{amsart}

\usepackage{amsmath}
\usepackage{amsfonts}
\usepackage{amssymb}
\usepackage{amsthm}
\usepackage{longtable}
\usepackage{adjustbox,lipsum}
\usepackage{graphicx}
\usepackage{comment}
\usepackage[justification=centering]{caption}
\usepackage{float}
\usepackage[mathscr]{eucal}\catcode`\@=11

\usepackage{url}
\usepackage{enumerate}

\DeclareMathOperator{\Gal}{Gal}
\DeclareMathOperator{\Frob}{Frob}
\newcommand{\cA}{\mathcal{A}}
\newcommand{\Q}{\mathbb{Q}}
\newcommand{\PP}{\mathbb{P}}

\DeclareMathOperator{\re}{Re}

\theoremstyle{plain}
\newtheorem{theorem}{Theorem}[section]
\newtheorem{defn}{Definition}[section]
\newtheorem{lemma}{Lemma}[section]

\begin{document}

\baselineskip=17pt

\subjclass[2000]{Primary 11D61, Secondary 11B68}
\keywords{generalized Bernoulli  polynomials, exponential equations.}

\title{On a character-twisted analogue of Sch\"{a}ffer's equation}
\author{Kálmán Győry, Vandita Patel, \'Akos Pint\'er, and Samir Siksek}

\address{Kálmán Győry and \'Akos Pint\'er \newline
\indent Institute of Mathematics \newline
\indent University of Debrecen \newline
\indent P.O. Box 400, H-4002 Debrecen, Hungary}
\email{gyory@science.unideb.hu, apinter@science.unideb.hu}

\address{Vandita Patel\newline
\indent Alan Turing Building, Department of Mathematics,\newline
\indent The University of Manchester,\newline 
\indent Oxford Road, Manchester, M13 9PL, UK}
\email{vandita.patel@manchester.ac.uk}

\address{Samir Siksek\newline
\indent Mathematics Institute, Zeeman Building \newline
\indent University of Warwick, \newline
\indent Coventry, CV4 7AL, UK}
\email{samir.siksek@gmail.com}

\thanks{The research was supported in part by the NKFIH grants ANN130909 and K128088 of the Hungarian National Research, Development and Innovation Office.
}

\date{}

\begin{abstract}
Let $f$ be a positive integer, and let $\chi$ be a primitive quadratic character of conductor $f$. 
	Let $k$ be a positive
	integer, and write $B_k(\chi,X)$ for the $k$-th Bernoulli polynomial
	corresponding to $\chi$. Suppose $B_k(\chi,X)$ is irreducible and of
	degree at least $2$.
	 Then for 100\% of positive
integers $m$ divisible by $f$, the Diophantine equation
\[
	\chi(1) \cdot (x+1)^k+\chi(2) \cdot (x+2)^k+\cdots+\chi(m) \cdot (x+m)^k \, =\, y^n,
\]
has no solutions with $x$, $y$, $n$ integers, and $n \ge 2$.
\end{abstract}

\maketitle

\section{Introduction}

In 1956, Sch\"{a}ffer \cite{Schaffer}, inspired by the Erd\H{o}s--Moser conjecture, considered the now classic equation
\begin{equation}\label{eqn:Schaffer}
	1^k+2^k+\cdots+x^k=y^n.
\end{equation}
He gave four parametric families of solutions, and made a conjecture describing all solutions with $n \ge 2$ and $k \ge 1$. For a survey of results related to Sch\"{a}ffer's conjecture, see \cite{Survey}.
Dilcher \cite{Dilcher86}  considered a variant of \eqref{eqn:Schaffer}, 
proving the following theorem.  
\begin{theorem}[Dilcher, 1986]\label{thm:Dilcher86}
Let $f$ be a positive integer,
and let $\chi$ be a primitive quadratic character
of conductor $f$. Let $b \ne 0$. 
If $k$ is a sufficiently large fixed integer, then the equation
	\begin{equation}\label{eqn:Dilcher}
		\chi(1) \cdot 1^k+\chi(2) \cdot 2^k+\cdots+\chi(xf) \cdot (xf)^k \, = \, b y^n
	\end{equation}
has finitely many solutions with $x$, $y \ge 1$ and $n \ge 2$.
\end{theorem}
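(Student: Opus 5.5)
Dilcher's theorem rests on one identity. We first write the twisted power sum as a generalized Bernoulli polynomial, and then apply the classical effective finiteness theorem for superelliptic equations (Schinzel--Tijdeman together with Brindza's effective version).

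\textbf{Step 1: the identity.} With $B_{k+1}(\chi,X)$ the generalized Bernoulli polynomial attached to $\chi$, the standard relation $B_{k+1}(\chi,X+f)-B_{k+1}(\chi,X)=(k+1)\sum_{a=1}^{f}\chi(a)(X+a)^k$ telescopes over $x$ blocks of length $f$. This gives
\[
\sum_{j=1}^{xf}\chi(j)\,j^k=\frac{1}{k+1}\bigl(B_{k+1}(\chi,xf)-B_{k+1}(\chi,0)\bigr),
\]
which is a polynomial $P_k(x)\in\Q[x]$ of degree $k+1$ in $x$. Equation \eqref{eqn:Dilcher} therefore becomes $P_k(x)=by^n$, or after clearing denominators $D\,P_k(x)=Db\,y^n$ with integer coefficients.

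\textbf{Step 2: Schinzel--Tijdeman / Brindza.} For a polynomial $P\in\Q[x]$ with at least two distinct roots, the equation $P(x)=by^n$ with $|y|\ge2$ has $n$ effectively bounded. For each fixed $n\ge3$ there are only finitely many solutions. For $n=2$ the same holds provided $P$ has at least three roots of odd multiplicity. Solutions with $y=1$ satisfy $P(x)=b$, which has finitely many roots. So it suffices to prove that, for large $k$, $P_k$ has at least three simple roots. Then every exponent $n\ge2$ is covered.

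\textbf{Step 3: root structure, the main obstacle.} We must control the multiple roots of $B_{k+1}(\chi,X)-B_{k+1}(\chi,0)$. The plan is as follows.
\begin{enumerate}[(a)]
\item Use the parity of $\chi$. Since $B_{k+1}(\chi,f-X)=(-1)^{k+1}\chi(-1)B_{k+1}(\chi,X)$, the constant $B_{k+1}(\chi,0)$ vanishes when the parities of $k$ and $\chi$ are incompatible. In the remaining case one works with a shifted polynomial.
\item Use the derivative relation $B_{k+1}'(\chi,X)=(k+1)B_k(\chi,X)$. A multiple root $\alpha$ of $P_k$ is then a common root of $B_k(\chi,X)$ and $B_{k+1}(\chi,X)-B_{k+1}(\chi,0)$. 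This gives at most $\lfloor (k+1)/2\rfloor$ repeated roots.
\item For large $k$, get the remaining simple roots from the real-root asymptotics. Rescaled, $B_k(\chi,fX)$ behaves like a trigonometric polynomial (Dilcher's asymptotic for generalized Bernoulli polynomials), so it has about $2k/(\pi e)$ real roots near equally spaced points. Between consecutive roots of $B_k$ the values of $P_k$ alternate strictly around the constant, which forces many simple real roots of $P_k$. This certainly gives at least three.
\end{enumerate}
I expect this asymptotic count of simple real zeros to be the delicate step. One needs uniform error terms in the approximation $B_k(\chi,fX)\approx c_k\cos(2\pi X-\theta)$ (built from the Fourier expansion via $L(\cdot,\chi)$), valid on a fixed interval. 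That uniformity is exactly what requires $k$ sufficiently large, with $\chi$ fixed.

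Combining Steps 1--3 with Brindza's theorem gives finitely many $(x,y,n)$ for each sufficiently large $k$.
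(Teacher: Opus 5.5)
The paper gives no proof of this statement: it is quoted from Dilcher's 1986 paper as background, so there is no in-paper argument to compare with. Your reduction is the natural framework. Step 1 together with Schinzel--Tijdeman and LeVeque reduces everything to showing that $Q(X)=B_{k+1}(\chi,X)-B_{k+1}(\chi,0)$ has at least three simple roots in $\mathbb{C}$ for large $k$. However, Step 3(c) breaks down for one of the two parities.

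\textbf{The gap: the parity $\chi(-1)=(-1)^{k+1}$.} This is exactly the case $B_{k+1}(\chi,0)\neq 0$.
\begin{enumerate}
\item With the paper's normalization the reflection is $B_k(\chi,-X)=(-1)^k\chi(-1)B_k(\chi,X)$, about $0$ and not about $f/2$. So in this case $B_{k+1}(\chi,X)$ is even and $B_k(\chi,X)$ is odd, and $X=0$ is a root of $Q$ of multiplicity at least $2$.
\item More seriously, the trigonometric asymptotics give, uniformly on compact sets, $Q(fX)=c_{k+1}\bigl(\cos 2\pi X-1\bigr)+o(c_{k+1})$. The constant you subtract is itself an extremal value of the oscillation. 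Every zero of the limit function is double, and the values of $Q$ at the zeros of $B_k$ near the points $jf$ are $o(c_{k+1})$. There is no strict alternation, so your argument produces not a single simple root.
\item You also cannot aim for simple \emph{real} roots in general. For $f=4$, $k=6$ one finds $P_6(x)=-2x^2(1024x^4-960x^2+300)$, which has no simple real root. A sign computation shows that for this character and all large $k\equiv 2\pmod 4$ the roots of $Q$ near $\pm f$ are non-real.
\item Your remark about ``a shifted polynomial'' cannot help, since translation does not change root multiplicities.
\end{enumerate}

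\textbf{What is missing: a second-order estimate.} Let $\gamma$ be the zero of $B_k(\chi,\cdot)$ near $jf$, with $j\ge 1$ fixed. Use
\[
B_{k+1}(\chi,X+jf)-B_{k+1}(\chi,X)=(k+1)\sum_{0\le d<jf}\chi(d)(X+d)^k
\]
together with $B_{k+1}'(\chi,0)=0$. This gives $Q(\gamma)=(k+1)\chi(-1)(jf-1)^k\bigl(1+o(1)\bigr)\neq 0$. Now apply Hurwitz's theorem to $Q(fX)/c_{k+1}$ and to its derivative. Near $f$ there are exactly two zeros of $Q$ and exactly one zero of $Q'$, namely $\gamma$. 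Since $Q(\gamma)\neq 0$, the two zeros of $Q$ near $f$ are distinct, and by evenness the same holds near $-f$. That gives four simple, possibly non-real, roots, which is all LeVeque and Schinzel--Tijdeman need. In the other parity, where $B_{k+1}(\chi,0)=0$ and the limit is $\sin 2\pi X$, your argument does go through via Hurwitz.

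\textbf{Smaller slips.}
\begin{enumerate}
\item $P_k$ has degree $k$ for odd $\chi$ and $k-1$ for even $\chi$, not $k+1$. This is because $B_0(\chi)=0$, and $B_1(\chi)=0$ for even $\chi$.
\item Part (b) gives no information: an upper bound on the number of repeated roots does not produce any simple root.
\item The $2k/(\pi e)$ count concerns real zeros of classical Bernoulli polynomials over a growing range. It is not something a fixed-interval approximation yields.
\item If $P_k(x_0)=b$, then $(x_0,1,n)$ solves the equation for every $n$. So you must either exclude $y=1$ or show that $P_k(x)\neq b$ for $x\ge 1$ when $k$ is large.
\end{enumerate}
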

Bennett \cite{twistedBennett} refers to \eqref{eqn:Dilcher} as a \lq\lq character-twisted\rq\rq\ version of Sch\"{a}ffer's classic equation,
and determines all solutions with $b=-1$ and $3 \le k \le 6$, for the unique non-trivial primitive quadratic
character of conductor $4$.

\bigskip

In this paper we consider a shifted version of Dilcher's character-twisted equation \eqref{eqn:Dilcher}.
\begin{theorem}\label{thm:main}
Let $b \ne 0$, $k \ge 2$ be integers.  
Let $f$ be a positive integer, and let $\chi$ be a primitive quadratic
character of conductor $f$.
	Write $B_k(\chi,X)$ for the $k$-th Bernoulli polynomial corresponding to the character $\chi$ (we introduce this below).
	Suppose $B_k(\chi,X)$ is irreducible of degree $\ge 2$.  Then for 100\% of positive
integers $m$ satisfying $f \mid m$, we have that there are no solutions to the equation
\begin{equation}\label{equation:powsum}
\sum_{0 \le d<m} \chi(d) \cdot (x+d)^k \, =\, b y^n, \qquad x,~y \in \mathbb{Z}, \quad n \ge 2.
\end{equation}
\end{theorem}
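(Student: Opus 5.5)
The plan is to rewrite the left-hand side of \eqref{equation:powsum} through the generalized Bernoulli polynomials, and then to use a local obstruction modulo suitable primes dividing $m$. Recall that for $f \mid m$,
\[
\sum_{0\le d<m}\chi(d)(x+d)^k=\frac{1}{k+1}\bigl(B_{k+1}(\chi,x+m)-B_{k+1}(\chi,x)\bigr).
\]
Expanding $B_{k+1}(\chi,x+m)$ by Taylor's formula around $x$, and using $B_{k+1}'(\chi,X)=(k+1)B_k(\chi,X)$, the left-hand side becomes
\[
S_m(x)=m\,B_k(\chi,x)+\binom{k}{1}\frac{m^2}{2}B_{k-1}(\chi,x)+\cdots .
\]
So $S_m(x)\equiv m B_k(\chi,x)$ modulo a higher power of $m$, up to bounded denominators; these denominators are controlled by the coefficients of the $B_j(\chi,X)$, which have bounded denominators depending only on $k$ and $f$.

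Next I choose a prime $p$ with $p\,\|\,m$ and $p$ large, specifically $p\nmid 2bf(k+1)!$ and $p$ not dividing any denominator. I also want the polynomial $B_k(\chi,X)$ to have no root modulo $p$. Since $B_k(\chi,X)$ is irreducible of degree $\ge2$, the Chebotarev density theorem applied to its splitting field shows that the set $\mathcal P$ of primes $p$ for which $B_k(\chi,X)$ has no root mod $p$ has positive density $\delta>0$: by Jordan's lemma, a transitive group on $\ge2$ letters has a derangement. For such $p$ and every integer $x$, we get $p\nmid B_k(\chi,x)$, and hence $v_p(S_m(x))=v_p(m)+0=1$, because the remaining terms are divisible by $p^2$. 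But then $S_m(x)=by^n$ with $n\ge2$ and $p\nmid b$ forces $v_p(y^n)=1$, which is impossible. So \eqref{equation:powsum} has no solutions whenever some $p\in\mathcal P$ (beyond a finite exceptional set) exactly divides $m$.

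It remains to show that 100\% of $m=f m'$ have such a prime. Excluding primes dividing $f$, the condition becomes: $m'$ is divisible exactly once by some $p\in\mathcal P$. The count of $m'\le X$ avoiding this condition is $o(X)$. One way to see this is to note that the density of integers in which no $p\in\mathcal P$ with $p\le z$ appears to exactly the first power is $\prod_{p\in\mathcal P,\,p\le z}\bigl(1-\tfrac1p+\tfrac1{p^2}-\cdots\bigr)$. This product equals $\prod(1-\tfrac{p-1}{p^2}\cdots)$, roughly $\prod(1-1/p+O(1/p^2))$, and it tends to $0$ as $z\to\infty$ because $\sum_{p\in\mathcal P}1/p$ diverges by Chebotarev. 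This is a standard sieve/limit argument. The main obstacle is making this density step rigorous, together with the bookkeeping of denominators in the Taylor expansion, which must be done uniformly in $m$. The Chebotarev input itself is standard.
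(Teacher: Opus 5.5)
Your proof is correct. Its algebraic core is the paper's: the Taylor expansion of $B_{k+1}(\chi,X+m)$ using $B_{k+1}'=(k+1)B_k$, and Jordan's derangement theorem plus Chebotarev to get a positive-density set of primes modulo which $B_k(\chi,X)$ has no root. This forces $v_p$ of the sum to be exactly $1$ when $p\,\|\,m$.

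The genuine difference is the counting step. The paper proves Theorem~\ref{thm:count}, the asymptotic $\#N_S(X)\sim C_S X/(\log X)^{\alpha}$, via Delange's Tauberian theorem. This uses the full regular-density property (holomorphy of $\theta(s)$ on $\re(s)\ge 1$) and gives a quantitative bound $O(X/(\log X)^{\alpha})$ for the exceptional $m\le X$.

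Your truncated sieve needs only $\sum_{p\in\mathcal P}1/p=\infty$, and it is already rigorous, so the step you flag as the main obstacle is not one. For fixed $z$, the condition that $v_p(m')\ne 1$ for all $p\in\mathcal P$ with $p\le z$ is periodic modulo $\prod_{p\le z}p^2$. By the Chinese Remainder Theorem it therefore has density exactly $\prod_{p\in\mathcal P,\,p\le z}\bigl(1-\frac1p+\frac1{p^2}\bigr)$; the local factor is this finite expression, not an infinite alternating series, which is a harmless slip. This product bounds the upper density of the exceptional set for every $z$ and tends to $0$. The price is that you get only $o(X)$ with no rate, which is all that ``100\%'' requires.

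Finally, you explicitly exclude $p\mid 2(k+1)!$ and all denominators of the $B_j(\chi,X)$. This makes the remainder terms $\frac1j\binom{k}{j-1}m^jB_{k+1-j}(\chi,x)$, $j\ge 2$, visibly of $p$-adic valuation at least $2$. That is slightly more careful than the paper's Taylor lemma, whose final division by $k+1$ tacitly needs $q\nmid k+1$; this is harmless there, since only finitely many primes are affected.
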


Very little is known regarding irreducibility of generalized Bernoulli polynomials;
the following theorem of Dilcher \cite{Dilcher87} gives a sufficient, though far from necessary, condition.
\begin{theorem}[Dilcher]
	Let $p \equiv 3 \pmod{8}$ be a prime. Let 
	$\chi$ be an odd (i.e.\ $\chi(-1)=-1)$, non-principal quadratic character with conductor $p$.
        Then $B_k(\chi,X)$ is irreducible for odd $k\geq 1$.
\end{theorem}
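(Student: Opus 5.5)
The plan is to prove irreducibility over $\mathbb{Q}_\ell$ for a well-chosen prime $\ell$ using a Newton polygon, which gives irreducibility over $\mathbb{Q}$. I would start from the expansion
\[
B_k(\chi,X)=\sum_{j=0}^{k}\binom{k}{j}B_j(\chi)\,X^{k-j},
\]
where $B_j(\chi)$ are the generalized Bernoulli numbers. Since $\chi$ is non-principal, $B_0(\chi)=0$, so the degree is $k-1$. Since $\chi$ is odd, $B_j(\chi)=0$ for even $j$. For odd $k$ every surviving exponent $k-j$ is therefore even, and
\[
B_k(\chi,X)=g(X^2),\qquad \deg g=\tfrac{k-1}{2}.
\]
The leading coefficient is $kB_1(\chi)$. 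Here $B_1(\chi)=\frac1p\sum_{a=1}^{p}\chi(a)a=-h(-p)$ for $p>3$, and by genus theory $h(-p)$ is odd. For $p=3$ one has $B_1(\chi)=-1/3$.

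The key step is to choose a prime $\ell$ so that the Newton polygon of $B_k(\chi,X)$ at $\ell$, viewed as a polynomial in $X$, is a single segment with no interior lattice points. Concretely I would aim for the following Eisenstein-type shape, normalising by the leading coefficient:
\[
v_\ell\bigl(kB_1(\chi)\bigr)=0,\qquad v_\ell\Bigl(\binom{k}{j}B_j(\chi)\Bigr)\ge 1 \ \ (3\le j<k),\qquad v_\ell\bigl(B_k(\chi)\bigr)=1 .
\]
The Newton polygon would then join $(0,0)$ to $(k-1,1)$ with slope $1/(k-1)$. Since $\gcd(1,k-1)=1$, every root generates an extension of $\mathbb{Q}_\ell$ with ramification index divisible by $k-1$, so $B_k(\chi,X)$ is irreducible over $\mathbb{Q}_\ell$ and hence over $\mathbb{Q}$. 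Working with $g(X^2)$ rather than $g$ is essential here: it avoids the separate question of whether $X^2-\beta$ splits over $\mathbb{Q}_\ell(\beta)$.

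The natural first choice is $\ell=2$, because $p\equiv 3\pmod 8$ means $\chi(2)=-1$, i.e.\ $2$ is inert in $\mathbb{Q}(\sqrt{-p})$, and $h(-p)$ is odd. To obtain the valuations I would write
\[
B_j(\chi)=p^{j-1}\sum_{a=1}^{p}\chi(a)B_j(a/p)
\]
and use von Staudt--Clausen type information together with the $2$-adic $L$-function. The Euler factor $1-\chi(2)2^{j-1}=1+2^{j-1}$ is a unit, and Kummer congruences relate $B_j(\chi)/j$ to $B_1(\chi)$ modulo powers of $2$.

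I expect these valuation estimates to be the main obstacle. A naive Kummer-congruence argument suggests $v_2(B_j(\chi)/j)=0$. That would make $v_2\bigl(\binom{k}{j}B_j(\chi)\bigr)=v_2\bigl(\binom{k-1}{j-1}\bigr)$, which need not be positive. So the simple Eisenstein shape may fail at $2$ as stated, and I would have to refine the argument. I would first try a substitution $X\mapsto X+c$, or a rescaling $X\mapsto 2^{r}X$ or $X\mapsto pX$, that makes the shifted coefficients, which are again generalized Bernoulli numbers (via the addition formula $B_k(\chi,X+Y)=\sum\binom{k}{j}B_j(\chi,X)Y^{k-j}$), fit a single-segment polygon. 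Failing that, I would repeat the analysis at $\ell=p$, using the $p$-integrality of $B_j(\chi)/j$ and the congruence $B_j(\chi)\equiv B_{j+(p-1)/2}$-type relations with ordinary Bernoulli numbers. In either case the whole difficulty lies in pinning down these valuations exactly; once they are in hand, the Newton polygon argument is routine.
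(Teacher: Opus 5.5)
The paper does not prove this statement; it quotes it from Dilcher's 1987 paper. Judged on its own, your proposal has a genuine gap. Everything hinges on the $2$-adic valuations of the $B_j(\chi)$, and you neither establish them nor predict them correctly. You end with a list of fallbacks (shifts, rescalings, switching to $\ell=p$) rather than an argument.

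Your first plan, the Eisenstein shape at $\ell=2$, actually holds exactly as you wrote it. You abandoned it because of a slip at $j=1$. The Euler factor $1-\chi(2)2^{j-1}$ is a $2$-adic unit for $j\ge 2$, but at $j=1$ it equals $1-\chi(2)=2$, since $p\equiv 3\pmod 8$ gives $\chi(2)=-1$. So the Kummer congruence ties $(1+2^{j-1})B_j(\chi)/j$ to $2B_1(\chi)$, not to $B_1(\chi)$.

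Precisely, let $\omega$ be the character of conductor $4$. For odd $j$ one has $\chi\omega\cdot\omega^{-j}=\chi$, so
\[
L_2(1-j,\chi\omega)=-\bigl(1-\chi(2)2^{j-1}\bigr)\frac{B_j(\chi)}{j}.
\]
Since $\chi\omega\neq 1$ is even of the first kind, its Iwasawa power series lies in $\mathbb{Z}_2[[T]]$. Because $v_2(5^{j-1}-1)\ge 3$ for odd $j\ge 3$, this gives
\[
L_2(1-j,\chi\omega)\equiv L_2(0,\chi\omega)=-2B_1(\chi)\pmod 8 .
\]
With $v_2(B_1(\chi))=0$ (odd class number, or $B_1(\chi)=-\tfrac13$ for $p=3$), this yields $v_2(B_j(\chi))=1$ for every odd $j\ge 3$, not $0$ as your heuristic suggested.

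Consequently
\[
v_2\Bigl(\tbinom{k}{j}B_j(\chi)\Bigr)=v_2\Bigl(k\tbinom{k-1}{j-1}\tfrac{B_j(\chi)}{j}\Bigr)=v_2\Bigl(\tbinom{k-1}{j-1}\Bigr)+1\ge 1
\]
for odd $3\le j\le k$, with equality at $j=k$. The leading coefficient $kB_1(\chi)$ is a $2$-adic unit. So after clearing the odd denominators, $B_k(\chi,X)$ is Eisenstein at $2$ and hence irreducible for odd $k\ge 3$ (for $k=1$ it is a nonzero constant). No shift, rescaling or change of prime is needed.

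This also shows where the hypothesis $p\equiv 3\pmod 8$ enters. For $p\equiv 7\pmod 8$ one has $\chi(2)=1$, so $L_2(0,\chi\omega)=0$, the constant term becomes too divisible by $2$, and the Eisenstein argument breaks down. As a sanity check, for $p=3$ one finds $B_1(\chi)=-\tfrac13$, $B_3(\chi)=\tfrac23$, $B_5(\chi)=-\tfrac{10}{3}$, and $-\tfrac35 B_5(\chi,X)=X^4-4X^2+2$. The rest of your setup is sound: the reduction to $g(X^2)$, the degree $k-1$, and the leading coefficient $kB_1(\chi)$ with $h(-p)$ odd.
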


The proof of Theorem~\ref{thm:main} uses ideas from a theorem of Patel and Siksek \cite{PatelSiksek},
where the following theorem is established.
\begin{theorem}
Let $k \ge 2$ be even.
Then for $100\%$ of positive integers $m$, we have that there are no solutions to the equation
\[
	\sum_{0 \le d<m} (x+d)^k \, =\, y^n, \qquad x,~y \in \mathbb{Z}, \quad n \ge 2.
\]
\end{theorem}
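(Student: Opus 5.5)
Write $S_m(x)=\sum_{0\le d<m}(x+d)^k$. The plan is a $p$-adic obstruction: find a prime $p$ with $p\,\|\,m$ such that $\operatorname{ord}_p(S_m(x))=1$ for every $x\in\mathbb{Z}$. This rules out $S_m(x)=y^n$ with $n\ge 2$, including $y=0$, since then $S_m(x)\ne 0$. It then remains to show that almost all $m$ have such a prime.

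\emph{Step 1 (expansion).} Using the classical Bernoulli polynomials we have $S_m(x)=\frac{1}{k+1}\bigl(B_{k+1}(x+m)-B_{k+1}(x)\bigr)$. Taylor expanding in $m$ gives
\[
S_m(x)=m\,B_k(x)+\sum_{j\ge 2}\frac{m^j}{j!}\,\frac{k!}{(k+1-j)!}\,B_{k+1-j}(x).
\]
Suppose $p>k+1$. Then by von Staudt--Clausen $p$ does not divide any denominator of the coefficients of $B_i$ for $i\le k$, and $j!$ is a $p$-adic unit for $j\le k+1$. Hence if $p\,\|\,m$ we get
\[
S_m(x)\equiv m\,B_k(x)\pmod{p^2}.
\]
Here $B_k(x)$ is understood $p$-integrally. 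So if $B_k(X)$ has no root modulo $p$, then $\operatorname{ord}_p(S_m(x))=1$ for all integers $x$, and the equation has no solutions with $n\ge2$.

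\emph{Step 2 (Chebotarev).} Let $\mathcal{P}$ be the set of primes $p>k+1$ for which $B_k(X)$ has no root in $\mathbb{F}_p$. I claim $\mathcal{P}$ has positive natural density. Let $G$ be the Galois group of the splitting field of $B_k$, acting on the roots. By Chebotarev, it suffices to find $\sigma\in G$ fixing no root. For even $k$ it is known that $B_k(X)$ has no rational roots; I would use the results of Brillhart and Inkeri that the nontrivial rational roots of Bernoulli polynomials only occur for odd index. So every irreducible factor has degree $\ge2$. By Jordan's lemma, each transitive orbit admits a derangement.

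The obstacle is producing a single $\sigma$ that is simultaneously fixed-point-free on all orbits. I would try to exploit the symmetry $B_k(1-X)=B_k(X)$ for even $k$. A fallback, if this fails, is to weaken the requirement: it is enough that $p\nmid B_k(x)$ modulo $p$ for all $x$, or more generally that roots of $B_k$ mod $p$ are handled separately. For example, one could note that if $x_0$ is a simple root mod $p$, then $p^2$-level information from the $j=2$ term, namely $\tfrac{m^2}{2}kB_{k-1}(x)$, still forces odd valuation in suitable cases. I expect this step to be the main difficulty.

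\emph{Step 3 (density).} Suppose $\mathcal{P}$ has positive density. Then $\sum_{p\in\mathcal{P}}1/p=\infty$. The density of integers $m$ having no prime $p\in\mathcal{P}$ with $p\,\|\,m$ is at most $\prod_{p\in\mathcal{P},\,p\le z}(1-\tfrac1p+\tfrac1{p^2})$ for every $z$, by the Chinese remainder theorem on residues modulo $\prod p^2$. This product tends to $0$ as $z\to\infty$. Hence $100\%$ of $m$ have some $p\in\mathcal{P}$ with $p\,\|\,m$, and Step 1 finishes the proof.
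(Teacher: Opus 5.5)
\paragraph{Where the proof breaks.} Your Steps 1 and 3 are sound. Step 1 is the paper's Lemma~\ref{lem:Taylor}, and your hypothesis $p>k+1$ is actually cleaner. Your CRT sieve in Step 3 is a valid and more elementary substitute for the Delange--Tauberian count of Theorem~\ref{thm:count}, since only density zero is needed.

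The gap is Step 2, which carries the whole argument and which you leave open. Knowing that $B_k$ has no rational roots and applying Jordan's theorem to each irreducible factor separately does not give a single $\sigma\in\Gal(B_k)$ fixing no root. For example, $(X^2-2)(X^2-3)(X^2-6)$ has no rational root but has a root modulo every prime.

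The paper never has to deal with this. It only quotes this statement from Patel--Siksek without proof. In its own twisted analogue (Lemma~\ref{lem:main}), the fixed-point-free element comes straight from Jordan's theorem because $B_k(\chi,X)$ is \emph{assumed} irreducible. For the classical $B_k$ with $k$ even, irreducibility is not known in general. It does hold in special cases, e.g.\ $k$ a power of $2$, where the $2$-adic Newton polygon is a single segment of slope $1/k$. So you need a genuine extra input about $\Gal(B_k)$ at this point.

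\paragraph{Why your two remedies do not close it.} The involution $r\mapsto 1-r$ is indeed fixed-point-free on the roots, since $B_k(1/2)=(2^{1-k}-1)B_k\neq 0$. But nothing guarantees that some Galois element induces it on all roots simultaneously.

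The fallback is actually false. When $p\,\|\,m$ and $p>k+1$, you have $S_m(X)/m\equiv B_k(X)\pmod p$ as polynomials with $p$-integral coefficients. So a simple root $x_0$ of $B_k$ modulo $p$ is also a simple root of $S_m(X)/m$ modulo $p$. By Hensel's lemma there are then integers $x$ with $v_p(S_m(x))$ equal to any prescribed value $\ge 1$. The $m^2$-term cannot force odd valuation, and such a prime gives no obstruction at all.

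As written, the proposal proves the theorem only for those even $k$ where you can independently exhibit a fixed-point-free element of $\Gal(B_k)$.
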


\section{Generalised Bernoulli Numbers and Polynomials}
Our main reference for this section is Chapter 9.4 of Cohen's textbook \cite{Cohen}.
Let $\chi$ be a primitive Dirichlet character with conductor $f$.

\begin{defn}
	For $k \ge 0$, we define the $B_k(\chi,X)$ via the generating series
	\begin{equation}\label{eqn:generating}
te^{tX}\frac{\sum_{0\leq a<f} \chi(a)e^{at}}{e^{ft} -1} \, = \, \sum_{k \geq 0} \frac{B_k(\chi,X)}{k!}t^k.
	\end{equation}
We call $B_k(\chi,X)$ the $k$-th Bernoulli polynomial corresponding to $\chi$.
\end{defn}

We shall make use of the following property \cite[Proposition 9.4.4]{Cohen} 
relating the derivative of $B_k(\chi,X)$ to $B_{k-1}(\chi,X)$:
	\begin{equation}\label{eqn:derivative}
B^{\prime}_{k}(\chi,X) = kB_{k-1}(\chi,X).
	\end{equation}

Let $m$ be a positive integer and suppose $f \mid m$. We make use of the following relation 
\cite[Proposition 9.4.8]{Cohen} between $\chi$-twisted power sums and
generalised Bernoulli polynomials.
\begin{equation}\label{equation:sum}
    \sum_{0 \leq d <m} \chi(d)(X+d)^{k} = \frac{1}{k+1} \left( B_{k+1}(\chi,X + m) - B_{k+1}(\chi,X) \right).
\end{equation}

\section{Counting Integers with Restrictions on their Prime Factorisation}
Let $\PP$ be the set of prime numbers
and let $S \subseteq \PP$. Following Serre's paper \cite{Serre},
we say that $S$ has \textbf{regular density $\alpha>0$}
if, for $\re(s)>1$,
\begin{equation}\label{eqn:regular}
\sum_{p \in S} \frac{1}{p^s} \; =\; \alpha \cdot \log\left( \frac{1}{s-1} \right) \; + \; \theta(s)
\end{equation}
where $\theta$ extends to a holomorphic function on $\re(s) \ge 1$.
We say that the set $S$ has \textbf{Frobenian density $\alpha>0$}
if there exists a finite Galois extension $K/\Q$
and a subset $C$ of $G=\Gal(K/\Q)$, such that
\begin{itemize}
\item $C$ is a union of conjugacy classes in $G$;
\item $\alpha=\# C/\# G$;
\item for every sufficiently large prime $p$,
we have $p \in S$ if and only if $\Frob_p \in C$
where $\Frob_p$ is a Frobenius element of $G$
corresponding to $p$.
\end{itemize}
By the Chebotarev Density Theorem (e.g. \cite[Proposition 1.5]{Serre}),
 if $S$ has Frobenian density $\alpha>0$
then it has regular density $\alpha>0$.
We shall need the following standard sort of result from
analytic number theory. We cannot find a suitable
reference, so we give a proof, which is in fact an adaptation of
the proof of similar theorem \cite[Theorem 10]{KS_stab}.
\begin{theorem}\label{thm:count}
        Let $S$ be a set of primes of regular density $0 < \alpha<1$.
        Let $N_S(X)$ be the set of positive integers $n \le X$
        such that $v_p(n) \ne 1$ for all $p \in S$.
        Then, there exists some constant $C_S>0$ such that
        \[
                \# N_S(X) \; \thicksim \; C_S \cdot \frac{X}{(\log{X})^\alpha}.
        \]
\end{theorem}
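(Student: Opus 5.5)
The plan is to attach to $N_S$ the Dirichlet series $F(s)=\sum_{n \in N_S} n^{-s}$, where we abuse notation and let $N_S$ also denote the full set of positive integers $n$ with $v_p(n)\ne 1$ for all $p\in S$. We then apply a Tauberian theorem of Delange type, as in the proof of \cite[Theorem 10]{KS_stab}. The indicator function of this set is multiplicative, so $F$ has an Euler product
\[
F(s)=\prod_{p\notin S}\left(1-p^{-s}\right)^{-1}\prod_{p\in S}\left(1+\frac{p^{-2s}}{1-p^{-s}}\right),
\]
valid for $\re(s)>1$. We compare this with $\zeta(s)$ by writing
\[
F(s)=\zeta(s)\,\prod_{p\in S}\left(1-p^{-s}\right)\left(1+\frac{p^{-2s}}{1-p^{-s}}\right)=\zeta(s)\prod_{p\in S}\left(1-p^{-s}+p^{-2s}\right).
\]

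Next we isolate the singular behaviour at $s=1$. Taking logarithms gives $\log(1-p^{-s}+p^{-2s})=-p^{-s}+O(p^{-2\sigma})$, where $\sigma=\re(s)$. The error terms sum to a function holomorphic on $\re(s)>1/2$. Hence
\[
\log F(s)=\log\zeta(s)-\sum_{p\in S}p^{-s}+h(s),
\]
with $h$ holomorphic on $\re(s)\ge 1$ (indeed on $\re(s)>1/2$). We have $\log\zeta(s)=\log\frac{1}{s-1}+g(s)$ with $g$ holomorphic on $\re(s)\ge 1$, using the nonvanishing of $\zeta$ on the line $\re(s)=1$ and its simple pole at $1$. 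Combined with the regular density hypothesis \eqref{eqn:regular}, this gives
\[
F(s)=\frac{1}{(s-1)^{1-\alpha}}\,G(s),\qquad G(s)=\exp\bigl(g(s)-\theta(s)+h(s)\bigr).
\]
Here $G$ is holomorphic and nonvanishing on $\re(s)\ge 1$, and in particular $G(1)>0$.

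Finally we invoke Delange's Tauberian theorem (see, e.g., Tenenbaum's book, or the form used in \cite{KS_stab}), which applies to Dirichlet series with nonnegative coefficients. If $F(s)=G(s)(s-1)^{-\beta}+H(s)$ with $G,H$ holomorphic on $\re(s)\ge 1$, $G(1)\ne 0$ and $\beta>0$ not a nonpositive integer, then
\[
\sum_{n\le X}a_n\sim \frac{G(1)}{\Gamma(\beta)}\,\frac{X}{(\log X)^{1-\beta}}.
\]
We apply it with $\beta=1-\alpha\in(0,1)$ and $H=0$, and conclude $\#N_S(X)\sim C_S X/(\log X)^{\alpha}$ with $C_S=G(1)/\Gamma(1-\alpha)>0$. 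This is where the hypothesis $\alpha<1$ is used: it ensures $\beta>0$, so that $F$ genuinely has a singularity at $s=1$.

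The main obstacle is the Tauberian step. We must make sure the precise hypotheses of the version of Delange's theorem we cite are met: holomorphy (or continuous extension) of $G$ up to and including the line $\re(s)=1$, and not merely near $s=1$. This is why the regular density definition requires $\theta$ to extend holomorphically to $\re(s)\ge 1$. The rest is bookkeeping: checking that the branch of $(s-1)^{-(1-\alpha)}$ taken via $\exp$ of the logarithm is the principal one, and that $G(1)$ is real and positive. Both follow because $F(s)$ is real and positive for real $s>1$.
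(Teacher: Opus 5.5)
Your proof is correct and follows essentially the same route as the paper. Both arguments take the Euler product of the indicator function of $N_S$, extract the singularity $(s-1)^{-(1-\alpha)}$ from the regular density hypothesis, and apply Delange's Tauberian theorem (Tenenbaum, Theorem II.7.28, with $\omega=-\alpha$), giving the same constant $C_S=G(1)/\Gamma(1-\alpha)$. The only cosmetic difference is that you factor out $\zeta(s)$ and use the density of $S$ directly together with the non-vanishing of $\zeta$ on $\re(s)=1$, whereas the paper passes to $\sum_{p\notin S}p^{-s}$ and asserts that $\mathbb{P}\setminus S$ has regular density $1-\alpha$, which tacitly relies on that same fact about $\zeta$.
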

\begin{proof}
        Let
        \[
                a_n=\begin{cases}
                        1 & \text{if $v_p(n) \ne 1$ for all $p \in S$}\\
                        0 & \text{otherwise}.
                \end{cases}
        \]
        Then $\# N_S(X)=\sum_{n \le X} a_n$. We shall make use of a Tauberian theorem
        due to Delange \cite[page 350]{Tenenbaum} to estimate $\#N_S(X)$.

        Consider the Dirichlet series
        \[
                D(s)=\sum_{n=1}^\infty \frac{a_n}{n^s},
        \]
        which defines a holomorphic function on $\re(s)>1$.
        We need to analyse the behaviour of $D(s)$ in the neighbourhood
        of $s=1$.
        For $\re(s)>1$ we write $D(s)$ as an Euler product
        \[
                        D(s)  =\prod_{p \in S} \left(1+ \frac{1}{p^{2s}}+
			\frac{1}{p^{3s}}+\frac{1}{p^{4s}}+\cdots \right)
                \cdot \prod_{p \notin S} \left(1+ \frac{1}{p^{s}}+\frac{1}{p^{2s}}+\cdots \right) .
        \]
        Thus
        \[
                \log(D(s)) \; = \; \sum_{p \notin S} \frac{1}{p^s} \; + \; \theta(s)
        \]
        where $\theta$ is holomorphic at $s=1$. Since $\mathbb{P} \setminus S$
        has regular density $1-\alpha>0$, we conclude from \eqref{eqn:regular} that
        \[
                \log(D(s)) \; = \; (1-\alpha) \cdot \log \left(\frac{1}{s-1} \right)+\phi(s)    
        \]
        where $\phi$ is holomorphic at $s=1$. Hence
        \[
                D(s) \; = \; \frac{g_0(s)}{(s-1)^{1+\omega}}
        \]
        where $g_0(s)=\exp(\phi(s))$ is holomorphic and non-vanishing at $s=1$, and $\omega=-\alpha$;
        the notation here is chosen to match that of Theorem II.7.28 of \cite{Tenenbaum}.
        We note that $-1<\omega < 0$. Applying the aforementioned theorem gives
        \[
                \sum_{n \le X} a_n \; \thicksim \; 
                \frac{g_0(1)}{\Gamma(\omega+1)} \cdot X(\log{X})^\omega,
        \]
	where $\Gamma$ is the Gamma function.
        This gives the theorem with $C_S=\exp(\phi(1))/\Gamma(1-\alpha)$.
\end{proof}

\section{Proof of Theorem~\ref{thm:main}}
From now on $\chi$ will be a primitive quadratic character. Thus $\chi$ takes values
in $\{-1,0,1\}$, and it immediately follows from \eqref{eqn:generating} that $B_k(\chi,X) \in \Q[X]$ for all $k$.
\begin{lemma}\label{lem:Taylor}
Let $q$  be prime not dividing any of the denominators of 
	the coefficients appearing in $B_{k+1}(\chi,X)$. 
	Let $a$, $m$ be integers, with $m \geq 2$. Suppose $q \mid m$ and $f \mid m$. Then
\[
   \sum_{0 \leq d <m} \chi(d)(a+d)^{k} \equiv m B_k(\chi,a) \pmod{q^2}
\]
\end{lemma}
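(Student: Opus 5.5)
Using \eqref{equation:sum}, the left-hand side equals $\frac{1}{k+1}\bigl(B_{k+1}(\chi,a+m)-B_{k+1}(\chi,a)\bigr)$. We Taylor-expand $B_{k+1}(\chi,X)$ around $X=a$:
\[
B_{k+1}(\chi,a+m)-B_{k+1}(\chi,a)=\sum_{j\ge 1}\frac{B^{(j)}_{k+1}(\chi,a)}{j!}\,m^j .
\]
Iterating \eqref{eqn:derivative} gives $B^{(j)}_{k+1}(\chi,X)=(k+1)k\cdots(k+2-j)\,B_{k+1-j}(\chi,X)$. Hence
\[
\frac{B^{(j)}_{k+1}(\chi,X)}{j!}=\binom{k+1}{j}B_{k+1-j}(\chi,X),
\]
and dividing by $k+1$ yields the exact polynomial identity
\[
\sum_{0\le d<m}\chi(d)(a+d)^k=\sum_{j=1}^{k+1}\frac{1}{k+1}\binom{k+1}{j}B_{k+1-j}(\chi,a)\,m^j .
\]
The $j=1$ term is exactly $mB_k(\chi,a)$.

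It remains to show that each term with $j\ge 2$ is $q$-integral and divisible by $q^2$. We write $\frac{1}{k+1}\binom{k+1}{j}=\frac1j\binom{k}{j-1}$. So the $j$-th term is $\binom{k}{j-1}\frac{m^j}{j}B_{k+1-j}(\chi,a)$.

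For the Bernoulli factor, note that $B_{k+1-j}(\chi,X)$ is, up to the factor $\frac{(k+1-j)!}{(k+1)!}$, the $(j)$-th derivative of $B_{k+1}(\chi,X)$. Rather than dividing by factorials, we use directly that $\binom{k+1}{j}B_{k+1-j}(\chi,X)=B^{(j)}_{k+1}(\chi,X)/j!$. The coefficients of $B^{(j)}_{k+1}/j!$ are the coefficients of $B_{k+1}$ multiplied by binomial coefficients $\binom{i}{j}$, hence are $q$-integral by hypothesis. So the $j$-th term equals $\frac{m^j}{k+1}\cdot(\text{a }q\text{-integral value at }a)$.

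This expression introduces a possible factor $1/(k+1)$. To avoid it, we use the form $\frac{m^j}{j}\binom{k}{j-1}B_{k+1-j}(\chi,a)$ together with the fact that the coefficients of $B_{k+1-j}$ are $q$-integral. That fact should follow because $B_{k+1-j}$ is, up to scalars, a derivative of $B_{k+1}$: its coefficients are those of $B_{k+1}$ times ratios $\binom{i}{j}\big/\binom{k+1}{j}$. I expect this $q$-integrality bookkeeping to be the main subtle point. If it fails when $q\mid k+1$, I would fall back on the generating series \eqref{eqn:generating}, where $B_n(\chi,X)=\sum_i\binom{n}{i}B_i(\chi)X^{n-i}$, and argue that the denominators of the $B_i(\chi)$ with $i\le k+1$ all appear in $B_{k+1}(\chi,X)$.

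Granting $q$-integrality of $B_{k+1-j}(\chi,a)$, it suffices that $v_q(m^j/j)\ge 2$ for $j\ge2$. Since $v_q(m)\ge1$, we have $v_q(m^j/j)\ge j-v_q(j)$. For $j\ge 2$ this is at least $2$, because $v_q(j)\le \log_q j\le j-2$ whenever $j\ge 3$ or $q\nmid j$. The only exception is $j=q=2$, where $m^2/2$ has valuation only $\ge1$. In that case I would treat the $j=2$ term separately, using the factor $\binom{k}{1}B_{k-1}(\chi,a)=\tfrac12 B''_{k+1}(\chi,a)/(k+1)$, and if necessary invoke the hypothesis that $2$ does not divide the denominators, which typically forces $2$ to be excluded in applications. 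With that, every term with $j\ge2$ vanishes modulo $q^2$, and the congruence follows.
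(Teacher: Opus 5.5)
Your proof has a genuine gap at the step you flag yourself. The $q$-integrality of $B_{k+1-j}(\chi,X)$ for $j\ge 2$ does not follow from that of $B_{k+1}(\chi,X)$. By your own computation, its coefficients are those of $B_{k+1}$ multiplied by $\binom{i}{j}/\binom{k+1}{j}$, and $q$ may divide $\binom{k+1}{j}$. Your fallback fails for the same reason: the coefficient of $X^{k+1-i}$ in $B_{k+1}(\chi,X)$ is $\binom{k+1}{i}B_i(\chi)$, and the binomial coefficient can cancel the denominator of $B_i(\chi)$.

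No repair is possible, because the statement as written is false when $q\mid k+1$. Take $\chi=(\cdot/5)$, so $f=5$, $B_2(\chi)=4/5$, $B_4(\chi)=-8$, and $B_i(\chi)=0$ for $i=0$ and for $i$ odd. Then $B_5(\chi,X)=8X^3-40X$ has integer coefficients, while $B_4(\chi,X)=\tfrac{24}{5}X^2-8$ and $B_3(\chi,X)=\tfrac{12}{5}X$. For $k=4$, $q=m=5$, $a=1$, the left-hand side is $2^4-3^4-4^4+5^4=304$, whereas $mB_4(\chi,1)=-16$. The difference $320$ is divisible by $5$ but not by $25$. In your expansion the culprit is the $j=2$ term $\frac{m^2}{2}\binom{4}{1}B_3(\chi,a)=120a$, whose Bernoulli factor is not $5$-integral. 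Your treatment of the case $j=q=2$ is also left open rather than proved.

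The missing hypothesis is $q\nmid k+1$. It costs nothing in the application, where the lemma is only needed for all but finitely many primes $q$. The paper's two-line proof (Taylor expansion of $B_{k+1}(\chi,X+m)$ modulo $q^2$, then $B'_{k+1}=(k+1)B_k$, then the sum formula) tacitly divides by $k+1$, so it also needs this assumption.

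With that hypothesis, the observation you abandoned finishes the proof at once. Write $B_{k+1}(\chi,X)=\sum_i c_iX^i$. The $j$-th term of $\frac{1}{k+1}\bigl(B_{k+1}(\chi,a+m)-B_{k+1}(\chi,a)\bigr)$ is $\frac{1}{k+1}\,m^j\,B^{(j)}_{k+1}(\chi,a)/j!$. The polynomial $B^{(j)}_{k+1}/j!$ has coefficients $\binom{i}{j}c_i$, hence is $q$-integral. Since $k+1$ is a $q$-adic unit, every term with $j\ge 2$ has valuation at least $j\,v_q(m)\ge 2$.

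Keeping $1/(k+1)$ outside instead of trading it for $1/j$ also makes the $j=q=2$ case disappear. It likewise removes any need for integrality of the lower polynomials $B_{k+1-j}(\chi,X)$.
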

\begin{proof}
By Taylor's Theorem,
\[
B_{k+1}(\chi,X+m)=B_{k+1}(\chi,X)+m B^\prime_{k+1}(\chi,X) \pmod{q^2}.
\]
	But $B_{k+1}^\prime(\chi,X)=(k+1) B_k(\chi,X)$ by \eqref{eqn:derivative}. The lemma follows from \eqref{equation:sum}.
\end{proof}

\begin{lemma}\label{lem:main}
    Suppose $B_k(\chi,X)$ is irreducible of degree $\ge 2$. Then there is a set of primes
	$S$ having positive regular density, such that for any $a \in \mathbb{Z}$,
    \[
    v_q\left(\sum_{0 \le d <m} \chi(d)(a+d)^k\right)=1
    \]
    whenever $q \in S$ and $q \mid\mid m$ and $f \mid m$.
\end{lemma}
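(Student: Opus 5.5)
Take $S$ to be the set of sufficiently large primes $q$ for which $B_k(\chi,X)$ has no root modulo $q$. Here ``sufficiently large'' means that $q$ divides neither the denominators of the coefficients of $B_{k+1}(\chi,X)$ and $B_k(\chi,X)$, nor the leading coefficient of $B_k(\chi,X)$, nor $f$. We will show that $S$ has the required properties and has positive density.

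\textbf{Step 1: valuation.} Fix $q \in S$ and $a\in\mathbb{Z}$, and suppose $q \mid\mid m$ and $f\mid m$. Since $m \ge q \ge 2$, Lemma~\ref{lem:Taylor} applies and gives
\[
\sum_{0\le d<m}\chi(d)(a+d)^k \equiv mB_k(\chi,a) \pmod{q^2}.
\]
Because $B_k(\chi,X)$ has no root mod $q$, the rational number $B_k(\chi,a)$ is a $q$-adic unit. Hence $v_q(mB_k(\chi,a))=1$. A quantity congruent mod $q^2$ to something of valuation exactly $1$ itself has valuation exactly $1$, which gives the claimed identity. This step is routine.

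\textbf{Step 2: density.} Let $K$ be the splitting field of $B_k(\chi,X)$ over $\Q$, and let $G=\Gal(K/\Q)$. Since $B_k(\chi,X)$ is irreducible, $G$ acts transitively on its $n=\deg B_k(\chi,X)\ge 2$ roots. Let $C$ be the set of elements of $G$ acting without fixed points on the roots. Then $C$ is a union of conjugacy classes.

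For all sufficiently large $q$ (unramified in $K$, and with $q$ not dividing denominators or the leading coefficient), the polynomial $B_k(\chi,X)$ has a root mod $q$ if and only if $\Frob_q$ fixes some root. Hence $q\in S$ if and only if $\Frob_q\in C$, so $S$ has Frobenian density $\#C/\#G$.

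It remains to see that $C\neq\emptyset$. This is Jordan's classical lemma: a finite group acting transitively on a set of size $n\ge2$ contains a fixed-point-free element. To see it, apply Burnside's lemma. The average number of fixed points is $1$, and the identity has $n\ge 2$ fixed points, so some element has $0$ fixed points. Thus $\#C/\#G>0$, and by Chebotarev (as recalled above) $S$ has positive regular density.

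\textbf{Main obstacle.} The only substantive point is the nonemptiness of $C$, which Jordan's lemma handles; this is exactly where irreducibility and degree $\ge 2$ are used. We will also note that the density is automatically $<1$, since the identity is not in $C$. This is relevant for applying Theorem~\ref{thm:count} later, though it is not required for the lemma itself.
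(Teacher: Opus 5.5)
Your proof is correct and is essentially the paper's argument: $S$ is, up to finitely many primes, the set of $q$ with $\Frob_q$ acting freely on the roots, which is nonempty by Jordan's lemma (your Burnside argument is the standard proof). The valuation statement then follows from Lemma~\ref{lem:Taylor}, and you are slightly more careful than the paper in excluding primes dividing the denominators of $B_{k+1}(\chi,X)$, which that lemma requires.

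One small imprecision: ``root mod $q$ iff $\Frob_q$ fixes a root'' needs $q$ not to divide the discriminant of $B_k(\chi,X)$, which the paper assumes explicitly; being unramified in $K$ is not enough. For example, $X^2-20$ has the root $0$ mod $2$ although $2$ is inert in $\Q(\sqrt5)$. Only finitely many primes are involved, so your ``sufficiently large'' covers it.
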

\begin{proof}
Let $G$ be the Galois group of $B_k(\chi,X)$, which we think of
as a transitive permutation group on the roots. Let $C$ be the set of elements of $G$
	that act freely on the roots. By a theorem of Jordan \cite{Jordan},
	the set $C$ is non-empty. Clearly $C$ is a union of
	conjugacy classes. We let $S$ be the 
	set of primes $q$ such that $\Frob_q \in C$. This set
	has positive Frobenian and hence positive regular density.
	We remove from $S$, without affecting the density,
	those primes that appear in the denominators
	of the coefficients of $B_k(\chi,X)$, or divide
	the numerator of its discriminant or its leading
	coefficient. It follows
	that if $q \in S$ then $B_k(\chi,X)$ has no roots
	modulo $q$. 
	Thus, for any $a \in \mathbb{Z}$, we have $v_q(B_k(\chi,a))=0$. The lemma follows from Lemma~\ref{lem:Taylor}.
\end{proof}

\begin{proof}[Proof of Theorem~\ref{thm:main}]
Let $S$ be as in Lemma~\ref{lem:main}.
We exclude from $S$ the primes that divide $b$; this does not affect the fact that $S$ has positive regular density.
Let $\cA$ be the set of integers $m$ with $f \mid m$ such that \eqref{equation:powsum} has a solution, and write $\cA(X)$ for the set 
of $m \le X$ belonging to $\cA$. 
Let $N_S(X)$ be as in Theorem~\ref{thm:count};
by that theorem it will be sufficient to show that $\cA(X) \subseteq N_S(X)$.
	Let $m \in \cA(X)$. Then we have a solution \eqref{equation:powsum},
	say $x=a$, and $y=c$. Thus,
	\[
		\sum_{0 \le d<m} \chi(d)(a+d)^k=b c^n
	\]
	with $n \ge 2$. Let $q \in S$. Then $v_q(b c^n) \ne 1$.
	By Lemma~\ref{lem:main}, $v_q(m) \ne 1$.
	Thus $m \in N_S(X)$ as required.
\end{proof}

\bibliographystyle{abbrv}
\bibliography{Unit}

@incollection {Survey,
    AUTHOR = {Coppola, Nirvana and Curc\'o-Iranzo, Mar and Khawaja, Maleeha
              and Patel, Vandita and \"Ulkem, \"Ozge},
     TITLE = {Power values of power sums: a survey},
 BOOKTITLE = {Women in numbers {E}urope {IV}---research directions in number
              theory},
    SERIES = {Assoc. Women Math. Ser.},
    VOLUME = {32},
     PAGES = {155--193},
 PUBLISHER = {Springer, Cham},
      YEAR = {[2024] \copyright 2024},
      ISBN = {978-3-031-52162-1; 978-3-031-52163-8},
   MRCLASS = {11D41},
  MRNUMBER = {4786493},
MRREVIEWER = {Pedro-Jos\'e\ Cazorla Garc\'ia},
       DOI = {10.1007/978-3-031-52163-8\_6},
       URL = {https://doi.org/10.1007/978-3-031-52163-8_6},
}

@article {Schaffer,
    AUTHOR = {Sch\"affer, Juan J.},
     TITLE = {The equation {$1^p+2^p+3^p+\cdots+n^p=m^q$}},
   JOURNAL = {Acta Math.},
  FJOURNAL = {Acta Mathematica},
    VOLUME = {95},
      YEAR = {1956},
     PAGES = {155--189},
      ISSN = {0001-5962,1871-2509},
   MRCLASS = {10.1X},
  MRNUMBER = {78395},
MRREVIEWER = {J.\ W. S. Cassels},
       DOI = {10.1007/BF02401100},
       URL = {https://doi.org/10.1007/BF02401100},
}

@book {Cohen,
    AUTHOR = {Cohen, Henri},
     TITLE = {Number theory. {V}ol. {II}. {A}nalytic and modern tools},
    SERIES = {Graduate Texts in Mathematics},
    VOLUME = {240},
 PUBLISHER = {Springer, New York},
      YEAR = {2007},
     PAGES = {xxiv+596},
      ISBN = {978-0-387-49893-5},
   MRCLASS = {11-01 (11D61 11F80 11J86 11Mxx)},
  MRNUMBER = {2312338},
MRREVIEWER = {R.\ C.\ Baker},
}

@article {PatelSiksek,
    AUTHOR = {Patel, Vandita and Siksek, Samir},
     TITLE = {On powers that are sums of consecutive like powers},
   JOURNAL = {Res. Number Theory},
  FJOURNAL = {Research in Number Theory},
    VOLUME = {3},
      YEAR = {2017},
     PAGES = {Paper No. 2, 7},
      ISSN = {2522-0160,2363-9555},
   MRCLASS = {11D41 (11B68 11D61 11F80)},
  MRNUMBER = {3608381},
MRREVIEWER = {Reese\ Scott and Robert\ Styer},
       DOI = {10.1007/s40993-016-0068-0},
       URL = {https://doi.org/10.1007/s40993-016-0068-0},
}

@article {Dilcher87,
    AUTHOR = {Dilcher, Karl},
     TITLE = {Irreducibility of certain generalized {B}ernoulli polynomials
              belonging to quadratic residue characters},
   JOURNAL = {J. Number Theory},
  FJOURNAL = {Journal of Number Theory},
    VOLUME = {25},
      YEAR = {1987},
    NUMBER = {1},
     PAGES = {72--80},
      ISSN = {0022-314X,1096-1658},
   MRCLASS = {11B68 (11R09)},
  MRNUMBER = {871169},
MRREVIEWER = {T.\ Mets\"ankyl\"a},
       DOI = {10.1016/0022-314X(87)90016-3},
       URL = {https://doi.org/10.1016/0022-314X(87)90016-3},
}

@article {Dilcher86,
    AUTHOR = {Dilcher, Karl},
     TITLE = {On a {D}iophantine equation involving quadratic characters},
   JOURNAL = {Compositio Math.},
  FJOURNAL = {Compositio Mathematica},
    VOLUME = {57},
      YEAR = {1986},
    NUMBER = {3},
     PAGES = {383--403},
      ISSN = {0010-437X,1570-5846},
   MRCLASS = {11D61 (11B68)},
  MRNUMBER = {829328},
MRREVIEWER = {T.\ Mets\"ankyl\"a},
       URL = {http://www.numdam.org/item?id=CM_1986__57_3_383_0},
}

@article {twistedBennett,
    AUTHOR = {Bennett, Michael A.},
     TITLE = {A superelliptic equation involving alternating sums of powers},
   JOURNAL = {Publ. Math. Debrecen},
  FJOURNAL = {Publicationes Mathematicae Debrecen},
    VOLUME = {79},
      YEAR = {2011},
    NUMBER = {3-4},
     PAGES = {317--324},
      ISSN = {0033-3883,2064-2849},
   MRCLASS = {11D41 (11B68)},
  MRNUMBER = {2907968},
MRREVIEWER = {P.\ Bundschuh},
       DOI = {10.5486/PMD.2011.5081},
       URL = {https://doi.org/10.5486/PMD.2011.5081},
}

@misc{KS_stab,
      title={New Algebraic Points on Curves}, 
      author={Maleeha Khawaja and Samir Siksek},
      year={2025},
      eprint={2511.15635},
      archivePrefix={arXiv},
      primaryClass={math.NT},
      url={https://arxiv.org/abs/2511.15635}, 
      note={https://arxiv.org/abs/2511.15635},
}

@article{Jordan,
     author = {Jordan, Camille},
     title = {Recherches sur les substitutions},
     journal = {Journal de Math\'ematiques Pures et Appliqu\'ees},
     pages = {351--367},
     publisher = {Gauthier-Villars},
     volume = {2e s{\'e}rie, 17},
     year = {1872},
     language = {fr},
     url = {https://www.numdam.org/item/JMPA_1872_2_17__351_0/}
}

@book {Tenenbaum,
    AUTHOR = {Tenenbaum, G\'{e}rald},
     TITLE = {Introduction to analytic and probabilistic number theory},
    SERIES = {Graduate Studies in Mathematics},
    VOLUME = {163},
   EDITION = {Third},
      NOTE = {Translated from the 2008 French edition by Patrick D. F. Ion},
 PUBLISHER = {American Mathematical Society, Providence, RI},
      YEAR = {2015},
     PAGES = {xxiv+629},
      ISBN = {978-0-8218-9854-3},
   MRCLASS = {11-02 (11Kxx 11Mxx 11Nxx)},
  MRNUMBER = {3363366},
       DOI = {10.1090/gsm/163},
}

@incollection {Serre,
    AUTHOR = {Serre, Jean-Pierre},
     TITLE = {Divisibilit\'{e} de certaines fonctions arithm\'{e}tiques},
 BOOKTITLE = {S\'{e}minaire {D}elange-{P}isot-{P}oitou, 16e ann\'{e}e (1974/75),
              {T}h\'{e}orie des nombres, {F}asc. 1, {E}xp. {N}o. 20},
     PAGES = {28},
      YEAR = {1975},
   MRCLASS = {10D15},
  MRNUMBER = {0392831},
MRREVIEWER = {E. Grosswald},
}
\end{document}